\documentclass[11pt]{article}
\usepackage[T1]{fontenc}
\usepackage[utf8]{inputenc}
\usepackage{geometry}
\usepackage{amsmath}
\usepackage{amsthm}
\usepackage{enumitem}
\usepackage{setspace}
\usepackage[unicode=true,
 bookmarks=false,
 breaklinks=false,pdfborder={0 0 1},backref=false,colorlinks=false]
 {hyperref}
\usepackage[dvipsnames]{xcolor} 
\usepackage[color=Purple!50!white, textwidth=22mm]{todonotes}
\usepackage{comment}
\allowdisplaybreaks

\makeatletter
\theoremstyle{plain}
\newtheorem{thm}{\protect\theoremname}%
\theoremstyle{plain}
\newtheorem{lem}[thm]{\protect\lemmaname}
\theoremstyle{plain}

\theoremstyle{plain}
\newtheorem{cor}[thm]{\protect\corollaryname}
\newtheorem{problem}[thm]{\protect\problemname}

\newtheorem{innercustomgeneric}{\customgenericname}
\providecommand{\customgenericname}{}
\newcommand{\newcustomtheorem}[2]{%
  \newenvironment{#1}[1]
  {%
   \renewcommand\customgenericname{#2}%
   \renewcommand\theinnercustomgeneric{##1}%
   \innercustomgeneric
  }
  {\endinnercustomgeneric}
}

\newcustomtheorem{customthm}{Theorem}
\newcustomtheorem{customlemma}{Lemma}

\usepackage{amssymb}

\makeatother

\providecommand{\claimname}{Claim}
\providecommand{\lemmaname}{Lemma}
\providecommand{\theoremname}{Theorem}
\providecommand{\corollaryname}{Corollary}
\providecommand{\problemname}{Problem}

\setenumerate[0]{label=(\roman*), ref=\roman*}

\begin{document}
\title{A note on arithmetic progressions with restricted differences}
\author{David Conlon\thanks{Department of Mathematics, California Institute of Technology, Pasadena, CA 91125. Email:  {\tt dconlon@caltech.edu.} Research supported by NSF Award DMS-2348859.} \and Jacob Fox\thanks{Department of Mathematics, Stanford University, Stanford, CA 94305. Email: {\tt jacobfox@stanford.edu}. Research supported by NSF awards DMS-2154129 and DMS-2452737.} \and Huy Tuan Pham\thanks{Department of Mathematics, California Institute of Technology, Pasadena, CA 91125. Email:   {\tt htpham@caltech.edu.} Research supported by a Clay Research Fellowship.}}
\date{}
\maketitle

\begin{abstract}
In this note, we show how to adapt Tao's slice rank method to extend the Ellenberg--Gijswijt theorem on cap sets to the problem of forbidding arithmetic progressions with restricted differences. In particular, we show that if $q$ is an odd prime power, there is $\varepsilon_q>0$ such that if $S \subseteq \mathbb{F}_q$ with $0 \in S$ and $|S|>(q+1)/2$ and $A \subseteq \mathbb{F}_q^n$ contains no three-term arithmetic progression whose common difference is in $S^n$, then  $|A| \leq q^{(1-\varepsilon_q)n}$.
\end{abstract}

A celebrated theorem due to Roth~\cite{Roth} states that every subset of $[N]:=\{1,2,\ldots,N\}$ without a three-term arithmetic progression has size $o(N)$. The study of bounds for this problem has had a long history, culminating in a recent breakthrough of Kelley and Meka \cite{KM}, by whose work we now know that the largest such subset has size $N/e^{(\log N)^{\Theta(1)}}$. 

As shown by Meshulam \cite{Meshulam}, Roth's proof also adapts to finite abelian groups. In the finite field setting, where the problem, particularly in $\mathbb{F}_3^n$, is known as the cap set problem, the best known bound was dramatically improved by Ellenberg and Gijswijt \cite{EG} using a novel polynomial method originating in breakthrough work of Croot, Lev, and Pach \cite{CLP}. They proved that any subset of $\mathbb{F}_q^n$ without a three-term arithmetic progression has size at most $q^{(1-\varepsilon_q)n}$ for some $\varepsilon_q>0$. A beautiful way of formulating their argument was later found by Tao \cite{Tao} using a notion of rank for tensors that has since been termed the slice rank. 

A strengthening of Roth's theorem of considerable interest involves restricting the forbidden common difference to a special subset. In the finite field setting where we consider $\mathbb{F}_q^n$ with $q$ fixed, it is natural to consider the case where we fix a set $S \subseteq \mathbb{F}_q$ with $0 \in S$ and try to bound the size of the largest subset of $\mathbb{F}_q^n$ which forbids all three-term arithmetic progressions having a nonzero common difference from $S^n$. The case $S=\{0,1,2\}$ was studied by Bhangale, Khot, and Minzer \cite{BKM}. Improving on this together with Liu \cite{BKLM}, they addressed a question of Green \cite{Green} by giving a reasonable upper bound in the case $S=\{0,1\}$. Concretely, improving a weak bound coming from the density Hales--Jewett theorem~\cite{FK, Polymath}, they proved that $O_q((\log \log \log n)^{-c_q})$ is an upper bound for the density of the largest subset of $\mathbb{F}_q^n$ without an arithmetic progression with nonzero common difference in $\{0,1\}^n$.   

Green \cite{Green} even suggests that there may be a power-saving bound like that in the Ellenberg--Gijswijt theorem when $S = \{0,1\}$. The purpose of this brief note is to show how Tao's slice rank method can be used to prove a version of Roth's theorem with restricted differences in the finite field setting with such a power-saving bound, but under a stronger assumption on $|S|$ than that suggested by Green. 

\begin{thm}\label{threeapwithrestricteddifferences} 
For each odd prime power $q$, there is $\varepsilon_q>0$ such that if $S \subseteq \mathbb{F}_q$ with $0 \in S$ and $|S| >  (q+1)/2$ and $A\subseteq \mathbb{F}_q^n$ satisfies $|A| > q^{(1-\varepsilon_q)n}$, then $A$ contains a nontrivial three-term arithmetic progression whose common difference is in $S^n$.
\end{thm}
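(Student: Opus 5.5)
The plan is Tao's slice rank argument, with the restriction on the common difference built into the tensor through a low-degree polynomial. Write $\overline{S}=\mathbb{F}_q\setminus S$ and $d=|\overline S|=q-|S|$. The hypothesis $|S|>(q+1)/2$ is equivalent to $d<(q-1)/2$. Define the one-variable polynomial
\[ g(t)=\prod_{s\in\overline S}(t-s), \]
which has degree $d$, vanishes on $\overline S$, and satisfies $g(0)\neq 0$ because $0\in S$. Suppose $A$ has no nontrivial three-term progression with common difference in $S^n$. Consider the function $T:A\times A\times A\to\mathbb{F}_q$ given by
\[ T(x,y,z)=\prod_{i=1}^n\bigl(1-(x_i-2y_i+z_i)^{q-1}\bigr)\,g(y_i-x_i). \]
The first factor is the indicator of $x+z=2y$. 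The second factor vanishes unless $y-x\in S^n$. So $T(x,y,z)\ne 0$ forces $(x,y,z)$ to be a progression with common difference $y-x\in S^n$. By assumption such a progression is trivial, so $x=y=z$. On the diagonal, $T(x,x,x)=g(0)^n\neq 0$. Hence $T$ is a diagonal tensor with nonzero diagonal entries, and by Tao's lemma its slice rank equals $|A|$.

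For the upper bound, expand $T$ as a polynomial in the $3n$ variables $x_i,y_i,z_i$. Since we only evaluate on $\mathbb{F}_q$, we reduce each variable modulo $t^q-t$, so every individual degree is at most $q-1$; this reduction does not increase total degree. Each coordinate factor has total degree at most $(q-1)+d$, so every monomial has total degree at most $n(q-1+d)$. Therefore in each monomial at least one of the three blocks ($x$, $y$ or $z$) has degree at most $n(q-1+d)/3$. Group the monomials according to which block is small, and within each group according to the small-degree monomial in that block. This writes $T$ as a sum of at most $3M$ slices, where $M$ is the number of monomials in $n$ variables with individual degrees at most $q-1$ and total degree at most $n(q-1+d)/3$.

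The key numerical point is that $d<(q-1)/2$ is equivalent to $(q-1+d)/3<(q-1)/2$. So the degree threshold is $cn$ for some constant $c$ strictly below the mean $(q-1)/2$ of the degree of a uniformly random monomial. A standard Chernoff or large-deviation bound then gives $M\le q^{(1-\varepsilon_q)n}$. Concretely, $M/q^n$ is the probability that a sum of $n$ i.i.d. uniform variables on $\{0,\dots,q-1\}$ is at most $cn$, and this is exponentially small.

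The constant $\varepsilon_q$ must not depend on $S$. There are only finitely many possible $d$, and the worst case is the largest $d<(q-1)/2$. That case gives $c\le (q-1)/2-1/6$ (since $d\le (q-2)/2$), so a single $\varepsilon_q>0$ works, after absorbing the factor $3$ for large $n$ or shrinking $\varepsilon_q$. Combining the two bounds gives $|A|\le 3q^{(1-\varepsilon_q)n}$, which contradicts the assumption $|A|>q^{(1-\varepsilon_q)n}$.

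I expect the main step is choosing $g$ of degree $q-|S|$ rather than the indicator of $S$. The indicator could have degree up to $q-1$ and would destroy the degree count. With this choice everything else is the routine Ellenberg--Gijswijt/Tao computation, and the condition $|S|>(q+1)/2$ is exactly what keeps the threshold below the mean.
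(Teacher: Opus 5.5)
Your proposal is correct and follows essentially the paper's route (the corollary with $S_1=\cdots=S_n=S$, via Lemmas~\ref{key} and~\ref{main3}): the same diagonal tensor built from $\prod_\ell\bigl(1-(x_\ell-2y_\ell+z_\ell)^{q-1}\bigr)$ times a degree-$(q-|S|)$ polynomial vanishing on the complement of $S$, and the same threshold $(q-1+|\overline S|)/3<(q-1)/2$. The only differences are cosmetic: you write the common difference as $y-x$ rather than $(z-x)/2$, you use a Chernoff bound where the paper cites the Lov\'asz--Sauermann lemma, and you absorb the factor $3$ by shrinking $\varepsilon_q$ where the paper uses a tensor-power trick.
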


Blasiak, Church, Cohn, Grochow, Naslund, Sawin, and Umans \cite{BCCHGNSU} and, independently, Alon noticed that the argument of Ellenberg and Gijswijt extends to give an upper bound on the size of $d$-colored sum-free sets in $\mathbb{F}_q^n$ when $d = 3$. We begin by extending this result to higher $d$. Here we denote by $M(n,D,a)$ the number of solutions to $x_1+\cdots+x_n \leq D$ where each $0 \leq x_i \leq a$ is an integer. 

\begin{thm} \label{main2} If $\{(a_{1j},a_{2j},\ldots,a_{dj}) : 1 \le j \le N\}$ is a family of $d$-tuples of elements of $\mathbb{F}_q^n$ for which the only solutions to $a_{1j_1}+a_{2j_2}+\cdots+a_{dj_d}=0$ are when $j_1=\cdots=j_d$, then $N \leq dM(n,(q-1)n/d,q-1)$. 
\end{thm}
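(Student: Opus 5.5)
The plan is to run Tao's slice rank argument on a $d$-dimensional tensor. Define $T:[N]^d\to\mathbb{F}_q$ by
\[
T(j_1,\ldots,j_d)=\prod_{i=1}^n\Bigl(1-\bigl(a_{1j_1}(i)+a_{2j_2}(i)+\cdots+a_{dj_d}(i)\bigr)^{q-1}\Bigr),
\]
where $a(i)$ denotes the $i$-th coordinate. Since $1-x^{q-1}$ equals $1$ when $x=0$ and $0$ otherwise, $T(j_1,\ldots,j_d)$ is nonzero exactly when $a_{1j_1}+\cdots+a_{dj_d}=0$. Reading the hypothesis as also requiring that the diagonal sums $a_{1j}+\cdots+a_{dj}$ vanish (as in the colored sum-free setting), $T$ is a diagonal tensor with nonzero diagonal. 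Tao's lemma, that a diagonal tensor with all diagonal entries nonzero has slice rank equal to the size of its support, then gives $\operatorname{slicerank}(T)=N$. It remains to show $\operatorname{slicerank}(T)\le d\,M(n,(q-1)n/d,q-1)$.

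For the upper bound, I will expand the product as a polynomial in the $dn$ variables $a_{kj_k}(i)$, for $1\le k\le d$ and $1\le i\le n$. Each factor has total degree $q-1$, so every monomial has total degree at most $(q-1)n$. Since $x^q=x$ on $\mathbb{F}_q$, I may reduce so that each variable appears with exponent at most $q-1$. Grouping the variables by block $k$, a monomial has degree $e_k$ in block $k$ with $\sum_k e_k\le (q-1)n$. By pigeonhole, some $k$ satisfies $e_k\le (q-1)n/d$.

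I then assign each monomial to such a $k$ and collect terms. For a fixed $k$ and a fixed reduced monomial $m$ in block $k$ of degree at most $(q-1)n/d$, the collected terms have the form $m(a_{kj_k})\cdot g_{m}(j_1,\ldots,\widehat{j_k},\ldots,j_d)$, which is a single slice. The number of such $m$ for each $k$ is the number of exponent vectors in $\{0,\ldots,q-1\}^n$ with coordinate sum at most $(q-1)n/d$, namely $M(n,(q-1)n/d,q-1)$. Summing over the $d$ choices of $k$ gives the bound $d\,M(n,(q-1)n/d,q-1)$.

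The only delicate step is invoking Tao's diagonal lemma for general $d$; its proof is an induction on $d$ via a dimension-counting argument. I will cite it as in Tao's note, and all the other steps are routine. I also need to check the interpretation of the hypothesis, namely that the diagonal entries are nonzero. If the intended hypothesis omits $a_{1j}+\cdots+a_{dj}=0$, the statement would be vacuous otherwise, so the convention that the diagonal solutions exist must be what is meant.
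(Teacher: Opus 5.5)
Your proposal is correct and is essentially the paper's proof: the same tensor built from $f=\prod_{\ell=1}^n\bigl(1-(x_{1\ell}+\cdots+x_{d\ell})^{q-1}\bigr)$, Tao's lemma for the lower bound $N$, and exponent reduction plus pigeonhole on block degrees for the upper bound $d\,M(n,(q-1)n/d,q-1)$, exactly as in Lemma~\ref{key}. Your reading of the hypothesis (the diagonal sums $a_{1j}+\cdots+a_{dj}$ vanish) matches the ``if and only if'' in Lemma~\ref{key}; note only that without this condition the statement would be false, not vacuous (e.g.\ $a_{2j}=\cdots=a_{dj}=0$ with distinct nonzero $a_{1j}$ allows $N=q^n-1$).
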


Note that the Ellenberg--Gijswijt theorem with $A=\{a_j\}_{j=1}^N$ a subset of $\mathbb{F}_q^n$ without a nontrivial three-term arithmetic progression is precisely the case of Theorem \ref{main2} with $d=3$ where we take $a_{1j}=a_{3j}=a_j$ and $a_{2j}=-2a_j$ for each $j \in [N]$.

For a field $\mathbb{F}$, a {\it $d$-dimensional tensor} over $\mathbb{F}$ is a map $T:A^d \rightarrow \mathbb{F}$. It is of {\it slice rank one} if it can be written in the form $T(x_1,\ldots,x_d)=f(x_i)g(x_1,\ldots,x_{i-1},x_{i+1},\ldots,x_d)$ for some $i \in [d]$. The {\it slice rank} of a $d$-dimensional tensor $T$ is then the minimum number of slice rank one maps needed to express $T$ as a linear combination.  For $d=2$, this can be shown to equal the usual rank of a matrix. A $d$-dimensional tensor is {\it diagonal} if $T(x_1,\ldots,x_d)$ is zero unless $x_1=x_2=\cdots=x_d$. An important result due to Tao~\cite{Tao}, now usually referred to as Tao's lemma, says that the slice rank of a diagonal tensor is equal to the number of nonzero entries. 

The following lemma, proved using the slice rank method, is the key to establishing our results. 

\begin{lem}\label{key}
Suppose that, for each $i \in [d]$, $A_i=(a_{ij})_{j=1}^N$ is a sequence of $N$ elements of $\mathbb{F}_q^n$. If $p$ is a polynomial of degree $D$ in $dn$ variables $x_{i\ell}$ with $(i,\ell) \in [d] \times [n]$ such that $p(a_{1j_1},a_{2j_2},\ldots,a_{dj_d}) \neq 0$ if and only if $j_1=j_2=\cdots=j_d$, then $N \leq dM(n,D/d,q-1)$.  
\end{lem}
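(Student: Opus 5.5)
We follow Tao's slice rank argument. Define the $d$-dimensional tensor $T:[N]^d\to\mathbb{F}_q$ by $T(j_1,\ldots,j_d)=p(a_{1j_1},a_{2j_2},\ldots,a_{dj_d})$. By hypothesis $T$ is diagonal, and its diagonal entries $T(j,\ldots,j)$ are all nonzero. Tao's lemma then says that the slice rank of $T$ is exactly $N$. It therefore suffices to show that $T$ has slice rank at most $dM(n,D/d,q-1)$.

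The first step is to reduce $p$ without changing its values on $\mathbb{F}_q^{dn}$. Each evaluation point has coordinates in $\mathbb{F}_q$, where $x^q=x$. So we may replace $p$ by the polynomial obtained from repeatedly substituting $x_{i\ell}^q\mapsto x_{i\ell}$. This gives a polynomial in which every variable has individual degree at most $q-1$, the total degree does not exceed $D$, and the function on $(\mathbb{F}_q^n)^d$ is unchanged. We then expand $p$ as a linear combination of monomials $\prod_{i=1}^d m_i(x_{i\cdot})$. Here $m_i$ is a monomial in the $n$ variables $x_{i1},\ldots,x_{in}$ with exponents in $\{0,\ldots,q-1\}$, and $\sum_i \deg m_i\le D$.

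The second step is pigeonhole. In each such monomial, some block $i$ must satisfy $\deg m_i\le D/d$. Assign each monomial to the least such $i$. Grouping the monomials assigned to block $i$ by their factor $m_i$, we obtain
\[
p=\sum_{i=1}^d\ \sum_{m}\ m(x_{i\cdot})\,g_{i,m}(x_{1\cdot},\ldots,x_{i-1,\cdot},x_{i+1,\cdot},\ldots,x_{d\cdot}),
\]
where for each $i$ the inner sum ranges over monomials $m$ in $n$ variables with individual degrees at most $q-1$ and total degree at most $D/d$. The number of such $m$ is exactly $M(n,D/d,q-1)$, since exponent vectors $(e_1,\ldots,e_n)$ with $0\le e_\ell\le q-1$ and $\sum e_\ell\le D/d$ are precisely what $M$ counts. (If $D/d$ is not an integer, the condition on the integer sum is the same as $\sum e_\ell\le\lfloor D/d\rfloor$, so this matches the definition.)

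Substituting $x_{i\cdot}=a_{ij_i}$, each term becomes $f(j_i)\,g(j_1,\ldots,\widehat{j_i},\ldots,j_d)$, which is a slice rank one tensor. Hence $T$ has slice rank at most $dM(n,D/d,q-1)$, and combining with Tao's lemma gives $N\le dM(n,D/d,q-1)$.

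There is no serious obstacle here. The points that need care are the reduction to individual degrees at most $q-1$, which must not increase the total degree, and checking that the count of low-degree monomials in one block is exactly $M(n,D/d,q-1)$. Tao's lemma is assumed from earlier in the paper.
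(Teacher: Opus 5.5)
Your proof is correct and follows the same route as the paper. You reduce $p$ to individual degrees at most $q-1$ without raising the total degree, use Tao's lemma to get slice rank exactly $N$, and use pigeonhole on block degrees to bound the slice rank by $dM(n,D/d,q-1)$; you also write out the monomial grouping and count that the paper leaves to ``argue as in Tao''.
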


\begin{proof}
Let $P$ be the polynomial in $dn$ variables which is formed from $p$ by replacing the exponent of each variable that appears with exponent $m \geq q$ with the remainder when $m$ is divided by $q-1$, except when $(q-1)\mid m$ where we replace the exponent with $q-1$. As $x^q=x$ in $\mathbb{F}_q$, $P$ and $p$ have the same values on all inputs. Then $P$ is a polynomial of degree at most $D$ with degree at most $q-1$ in each variable. 

Consider the tensor $T:[N]^d \to \mathbb{F}_q$ with $T(j_1,j_2,\ldots,j_d)=P(a_{1j_1},a_{2j_2},\ldots,a_{dj_d})$. By assumption, $T$ is a diagonal tensor with all $N$ diagonal entries nonzero, so, by Tao's lemma, the slice rank of $T$ is $N$. On the other hand, each monomial of $P$ is the product of $dn$ variables, each variable with degree at most $q-1$, and total degree at most $D$. Hence, for each monomial of $P$, there is some $i \in [d]$ such that the total degree coming from the $n$  variables $x_{i\ell}$ with $\ell \in [n]$ is at most $D/d$. From here, we argue as in \cite{Tao} to conclude that the slice rank of $P$ is at most $dM(n,D/d,q-1)$, which completes the proof.
\end{proof}

Let $f({\bf x_1},{\bf x_2},\ldots,{\bf x_d}):=\prod_{\ell=1}^n \left(1-(x_{1\ell}+x_{2\ell}+\cdots+x_{d\ell})^{q-1}\right)$, where each ${\bf x_i}=(x_{i1},\ldots,x_{in}) \in \mathbb{F}_q^n$. Letting $p=f$ in Lemma~\ref{key}, we get the multicolor sum-free theorem, Theorem \ref{main2}.  

The main additional observation needed for the proof of Theorem~\ref{threeapwithrestricteddifferences} is that if we let $p=fg$, where $g$ is a polynomial of not too large degree, we can still obtain a good upper bound on the size of our sets, which can be seen as being multicolor sum-free with additional constraints. To estimate how much room we have, we make use of the following lemma, due to L.~M.~Lov\'asz and Sauermann \cite{LS}, which gives a good upper bound on $M(n,\alpha(m-1)n,m-1)$. 

\begin{lem}\label{lslemma}
For every $0 < \alpha < 1/2$ and positive integer $m$,  
$$|\{(b_1,\dots,b_n) \in \{0,1,\ldots,m-1\}^n : b_1+\cdots+b_n \leq \alpha (m-1)n \}| \leq (\Gamma_{\alpha,m})^n,$$
where 
$$\Gamma_{\alpha,m}=\inf_{0 < \gamma < 1} \gamma^{-\alpha (m-1)}\left(1+\gamma+\cdots+\gamma^{m-1}\right).$$
\end{lem}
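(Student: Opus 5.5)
This is the standard Chernoff-type (exponential moment) bound. Fix $\gamma\in(0,1)$ and compare the indicator of the event $b_1+\cdots+b_n\le \alpha(m-1)n$ with the exponential weight $\gamma^{b_1+\cdots+b_n-\alpha(m-1)n}$. Since $0<\gamma<1$, whenever $\sum_\ell b_\ell \le \alpha(m-1)n$ the exponent $\sum_\ell b_\ell-\alpha(m-1)n$ is nonpositive, so this weight is at least $1$. For all other tuples the weight is still nonnegative. Hence the number of tuples in question is at most
$$\sum_{(b_1,\dots,b_n)\in\{0,\dots,m-1\}^n}\gamma^{b_1+\cdots+b_n-\alpha(m-1)n}=\gamma^{-\alpha(m-1)n}\prod_{\ell=1}^n\Bigl(\sum_{b=0}^{m-1}\gamma^{b}\Bigr).$$
The factorization holds because the sum runs over a product set and the summand is multiplicative in the coordinates.

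The right-hand side equals $\bigl(\gamma^{-\alpha(m-1)}(1+\gamma+\cdots+\gamma^{m-1})\bigr)^n$. The bound therefore holds for every $\gamma\in(0,1)$, and taking the infimum over $\gamma$ gives $(\Gamma_{\alpha,m})^n$. One small point: the map $x\mapsto x^n$ is continuous and increasing on $[0,\infty)$, so the infimum of the $n$th powers equals the $n$th power of the infimum. This justifies replacing $\inf_\gamma(\cdot)^n$ with $(\inf_\gamma\cdot)^n$.

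There is no real obstacle here. The only things to check are that $\gamma<1$ gives the correct direction of the inequality for tuples with small coordinate sum, and the infimum step just described. The hypothesis $\alpha<1/2$ is not needed for the inequality itself. Its role is to make the bound useful: at $\gamma=1$ the expression equals $m$, and its derivative in $\gamma$ there is $(m-1)m/2-\alpha(m-1)m$, which is positive exactly when $\alpha<1/2$. So for $\alpha<1/2$ we get $\Gamma_{\alpha,m}<m$, which is presumably how the lemma will be applied later in the paper.
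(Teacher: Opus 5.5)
Your argument is correct: for $0<\gamma<1$ the weight $\gamma^{b_1+\cdots+b_n-\alpha(m-1)n}$ dominates the indicator, the sum factorizes over the product set, and passing to the infimum is harmless. The paper gives no proof of its own and instead cites Lov\'asz and Sauermann; your exponential-moment (generating function) argument is the standard proof of this bound, and your remark that $\Gamma_{\alpha,m}<m$ for $\alpha<1/2$ matches the paper's comment (this needs $m\ge 2$, since for $m=1$ the expression is identically $1$).
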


As pointed out in \cite{LS}, we have $\Gamma_{\alpha,m}<m$ as the function above has value $m$ and positive derivative at $\gamma=1$, where, for the latter property, we used that $\alpha<1/2$. We will apply this with $m=q$ a prime power. 

\begin{lem} \label{main3} 
Suppose $0 < \alpha<1/2$, $\{(a_{1j},a_{2j},\ldots,a_{dj}) : 1 \le j \le N\}$ is a family of $d$-tuples of elements of $\mathbb{F}_q^n$, and $g$ is a polynomial of degree at most $(d\alpha-1)(q-1)n$. If  $a_{1j_1}+a_{2j_2}+\cdots+a_{dj_d}=0$ and $g(a_{1{j_1}},a_{2{j_2}},\ldots,a_{d{j_d}}) \neq 0$ hold if and only if $j_1=\cdots=j_d$, then $N \leq d(\Gamma_{\alpha,q})^n$. 
\end{lem}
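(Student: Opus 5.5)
The plan is to apply Lemma~\ref{key} with $p = fg$, where $f$ is the sum-free polynomial $f({\bf x_1},\ldots,{\bf x_d})=\prod_{\ell=1}^n \left(1-(x_{1\ell}+\cdots+x_{d\ell})^{q-1}\right)$ defined above, and then to bound the resulting quantity $M$ using Lemma~\ref{lslemma}.

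First we check the hypothesis of Lemma~\ref{key}. For ${\bf x_i}\in\mathbb{F}_q^n$, Fermat's little theorem in $\mathbb{F}_q$ gives $y^{q-1}=1$ for $y\neq 0$ and $0^{q-1}=0$. Hence $f({\bf x_1},\ldots,{\bf x_d})$ equals $1$ if ${\bf x_1}+\cdots+{\bf x_d}=0$, and equals $0$ otherwise. It follows that $p(a_{1j_1},\ldots,a_{dj_d})=f(\cdot)\,g(\cdot)$ is nonzero exactly when the sum vanishes and $g$ is nonzero at that point. By hypothesis, this happens if and only if $j_1=\cdots=j_d$. So Lemma~\ref{key} applies to $p$.

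Next we compute the degree. Since $\deg f = (q-1)n$ and $\deg g\le (d\alpha-1)(q-1)n$, we get $D=\deg p \le d\alpha(q-1)n$. Lemma~\ref{key} (applied with $D$ taken to be this upper bound, which is legitimate since its proof only uses an upper bound on the degree) then yields
$$N\le d\,M(n,D/d,q-1)\le d\,M(n,\alpha(q-1)n,q-1).$$
Here $M(n,\alpha(q-1)n,q-1)$ is exactly the number of vectors in $\{0,\ldots,q-1\}^n$ with coordinate sum at most $\alpha(q-1)n$. Lemma~\ref{lslemma} with $m=q$ bounds this count by $(\Gamma_{\alpha,q})^n$, which gives $N\le d(\Gamma_{\alpha,q})^n$.

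The argument has no real obstacle. The only subtlety is that the degree $D$ in Lemma~\ref{key} should be read as an upper bound. This needs no extra work: the reduction to the multilinear-type polynomial $P$ and the pigeonhole over monomials in that proof only ever use $\deg P\le D$. One should also note that if $d\alpha<1$ the degree hypothesis on $g$ forces $g=0$, which contradicts the hypothesis unless $N=0$, so that case is vacuous.
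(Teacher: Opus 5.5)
Your proposal is correct and follows the paper's argument: take $p=fg$ in Lemma~\ref{key}, note that $\deg p\le d\alpha(q-1)n$ so that $D/d\le\alpha(q-1)n$, and bound the resulting count by Lemma~\ref{lslemma} with $m=q$. Your extra remarks (reading $D$ as an upper bound, which also follows from the monotonicity of $M$ in $D$, and the vacuous case $d\alpha<1$) are valid.
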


Lemma \ref{main3} follows from Lemmas \ref{key} and \ref{lslemma}, by noting that the polynomial $p=fg$ has degree $D$ at most $(q-1)n+(d\alpha-1)(q-1)n =d\alpha(q-1)n$, so that $D/d \leq \alpha(q-1)n$.  

We can improve the bound in Lemma \ref{main3} slightly, removing the factor $d$ using a tensor product trick. 

\begin{thm} \label{main2'} With the same assumptions as in Lemma~\ref{main3}, $N \leq (\Gamma_{\alpha,q})^n$.

\end{thm}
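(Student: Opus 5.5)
The plan is the standard tensor power trick: apply Lemma~\ref{main3} to a $k$-fold product of the configuration, which lives in $\mathbb{F}_q^{kn}$, and take $k$-th roots.

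Fix $k \ge 1$ and index the new family by $\mathbf{j}=(j^{(1)},\ldots,j^{(k)}) \in [N]^k$. For each $i \in [d]$ set
\[
b_{i\mathbf{j}}=(a_{ij^{(1)}},\ldots,a_{ij^{(k)}}) \in \mathbb{F}_q^{kn},
\]
so that we have $N^k$ $d$-tuples. For the auxiliary polynomial, write the variables of $\mathbb{F}_q^{kn}$ in $k$ blocks of $n$ coordinates and let
\[
G(\mathbf{y}_1,\ldots,\mathbf{y}_d)=\prod_{t=1}^k g\bigl(\mathbf{y}_1^{(t)},\ldots,\mathbf{y}_d^{(t)}\bigr),
\]
where $\mathbf{y}_i^{(t)}$ is the $t$-th block of $\mathbf{y}_i$. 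Its degree is at most $k(d\alpha-1)(q-1)n=(d\alpha-1)(q-1)(kn)$, which is exactly the degree bound Lemma~\ref{main3} requires in dimension $kn$.

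Next we check the hypothesis of Lemma~\ref{main3} for the new family. Suppose $\sum_i b_{i\mathbf{j}_i}=0$ and $G(b_{1\mathbf{j}_1},\ldots,b_{d\mathbf{j}_d})\neq 0$. Then for every block $t$ we have $\sum_i a_{ij_i^{(t)}}=0$ and $g(a_{1j_1^{(t)}},\ldots,a_{dj_d^{(t)}})\neq 0$. By the original hypothesis this forces $j_1^{(t)}=\cdots=j_d^{(t)}$ for each $t$, hence $\mathbf{j}_1=\cdots=\mathbf{j}_d$. Conversely, if all the $\mathbf{j}_i$ are equal, then each block satisfies both conditions, since the "if" direction of the hypothesis holds. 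The conditions therefore hold exactly on the diagonal, and the product structure makes this bookkeeping routine. (Equivalently, one could apply Lemma~\ref{key} directly to $p=fG$ in dimension $kn$; both routes give the same bound.)

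Lemma~\ref{main3} now gives $N^k \le d(\Gamma_{\alpha,q})^{kn}$, so $N \le d^{1/k}(\Gamma_{\alpha,q})^n$ for every $k$. Letting $k\to\infty$ yields $N \le (\Gamma_{\alpha,q})^n$. The only point needing care is that $\alpha$, $q$ and $d$ do not depend on $k$, so the constant $\Gamma_{\alpha,q}$ in Lemma~\ref{lslemma} is uniform in the dimension. This holds because $\Gamma_{\alpha,q}$ depends only on $\alpha$ and $q$, so there is no real obstacle beyond confirming that the degree bound scales linearly in $k$, which it does.
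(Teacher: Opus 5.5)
Your proposal is correct and is essentially the paper's argument: the paper likewise forms the $k$-fold concatenated family $A^k \subseteq (\mathbb{F}_q^{kn})^d$ with the block-product polynomial $\overline{g}=\prod_{h=1}^k g$, checks the hypotheses of Lemma~\ref{main3}, and uses $N^k \le d(\Gamma_{\alpha,q})^{kn}$. The only differences are cosmetic: the paper phrases the conclusion as a contradiction for large $k$ rather than taking $d^{1/k}\to 1$, and your explicit check of the ``if'' direction (a product of nonzero field elements is nonzero) is a detail the paper leaves implicit.
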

\begin{proof}
Assume for contradiction that, for some $n>0$, there exists a sequence of $d$-tuples $A \subseteq (\mathbb{F}_q^n)^d$ of size $N > (\Gamma_{\alpha,q})^n$ such that the only solutions to $a_{1j_1}+\dots+a_{dj_d}=0$ and $g(a_{1j_1},\dots,a_{dj_d})\ne 0$ are when $j_1=\dots=j_d$. For a positive integer $k$, let $n'=kn$ and consider the sequence of $d$-tuples $\overline{A}=A^k \subset (\mathbb{F}_q^{nk})^d$ with $|\overline{A}| = N^{k}$. Each tuple $\overline{a}$ of $\overline{A}$ can be seen as a concatenation $\overline{a} = [\overline{a}^1,\dots,\overline{a}^k]$ of $k$ tuples of elements $\overline{a}^h = (a_{1j},\dots,a_{dj}) \in (\mathbb{F}_q^n)^d$ in $A$. Let $\overline{g}$ be the polynomial over $(\mathbb{F}_q^{n'})^d$ given by $\overline{g}(\overline{x}_1,\dots,\overline{x}_d) = \prod_{h=1}^{k} g(\overline{x}_1^h,\dots,\overline{x}_d^h)$, where we write $\overline{x} = [\overline{x}^1,\dots,\overline{x}^{k}]$ for $\overline{x}\in (\mathbb{F}_q^{n'})^d$. The degree of $\overline{g}$ is $k\deg(g) \le k(d\alpha - 1)(q-1)n = (d\alpha -1)(q-1)n'$.

We have $\overline{g}(\overline{a}_1,\dots,\overline{a}_{d}) \ne 0$ if and only if $g(\overline{a}_{1}^h,\dots,\overline{a}_d^h) \ne 0$ for all $h \in [k]$. Furthermore, $\overline{a}_1+\dots+\overline{a}_{d} = 0$ if and only if $\overline{a}_1^h+\dots+\overline{a}_{d}^h=0$ for all $h \in [k]$. By our assumption, for both of these conditions to hold, we must have that, for all $h\in [k]$, $(\overline{a}_1^h,\dots,\overline{a}_d^h) = (a_{1j_h},\dots,a_{dj_h})$ for some $j_h$. In particular, $\overline{A}$ satisfies the conditions of Lemma \ref{main3} with respect to the polynomial $\overline{g}$. However, for $k$ sufficiently large, $|\overline{A}| = N^k > d(\Gamma_{\alpha,q})^{nk} = d(\Gamma_{\alpha,q})^{n'}$, which contradicts Lemma \ref{main3}. 
\end{proof}

The following is a corollary of Theorem \ref{main2'} and implies Theorem \ref{threeapwithrestricteddifferences} by taking $S_1=\cdots=S_n=S$. 

\begin{cor}
Let $q$ be an odd prime power, and suppose $\{0\} \subseteq S_1,\ldots,S_n \subseteq \mathbb{F}_q$ and $A=\{a_j\}_{j=1}^N \subseteq \mathbb{F}_q^n$ is a set which contains no nontrivial three-term arithmetic progression with common difference in $S_1 \times \cdots \times S_n$. Let $\mu=\frac{1}{n}\sum_{j=1}^n (q-|S_j|)$ be the average size of the complements $\bar{S_j}$ and $\alpha = 1/3+\mu/(3(q-1))$. If $\alpha < 1/2$, then $|A| \leq (\Gamma_{\alpha,q})^n$.   
\end{cor}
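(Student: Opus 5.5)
We apply Theorem~\ref{main2'} with $d=3$ and the tuples $(a_{1j},a_{2j},a_{3j})=(a_j,-2a_j,a_j)$, exactly as in the reduction of Ellenberg--Gijswijt to Theorem~\ref{main2}. The extra polynomial $g$ will encode the restriction on the common difference. A solution of $a_{j_1}-2a_{j_2}+a_{j_3}=0$ is a three-term progression $a_{j_1},a_{j_2},a_{j_3}$ with common difference $r=a_{j_2}-a_{j_1}=a_{j_3}-a_{j_2}$. Since $q$ is odd, $2$ is invertible, so $r=0$ forces $j_1=j_2=j_3$. We want $g$ to be nonzero on such a triple exactly when $r\in S_1\times\cdots\times S_n$. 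Because $0\in S_\ell$ for all $\ell$, the diagonal triples (with $r=0$) then satisfy both conditions. Any off-diagonal triple satisfying both would be a nontrivial progression with difference in $S_1\times\cdots\times S_n$, which is excluded by hypothesis. So the ``if and only if'' condition of Lemma~\ref{main3} holds.

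To build $g$, we note that the difference can be written as a linear form in the variables, for example $r=x_2'-x_1$. Here $x_2'=-x_2/2$ recovers $a_{j_2}$ from the second coordinate $-2a_{j_2}$, so $r_\ell=-\tfrac12 x_{2\ell}-x_{1\ell}$. Define
\[
g(\mathbf{x_1},\mathbf{x_2},\mathbf{x_3})=\prod_{\ell=1}^n \prod_{s\in \mathbb{F}_q\setminus S_\ell}\bigl(r_\ell - s\bigr).
\]
Then $g\neq 0$ exactly when $r_\ell\in S_\ell$ for every $\ell$. The degree of $g$ is $\sum_\ell (q-|S_\ell|)=\mu n$.

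The only remaining point, and the main check, is the degree requirement of Lemma~\ref{main3}, namely $\deg g\le (d\alpha-1)(q-1)n$ with $d=3$. With $\alpha=1/3+\mu/(3(q-1))$ we get $3\alpha-1=\mu/(q-1)$, so $(3\alpha-1)(q-1)n=\mu n=\deg g$, and the bound holds with equality. We also need $0<\alpha<1/2$. Positivity is clear, and $\alpha<1/2$ is the stated hypothesis. Theorem~\ref{main2'} then gives $N=|A|\le(\Gamma_{\alpha,q})^n$.

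For Theorem~\ref{threeapwithrestricteddifferences}, take all $S_\ell=S$. Then $\mu=q-|S|<(q-1)/2$, so $\alpha<1/2$. We may therefore take $\varepsilon_q$ with $q^{1-\varepsilon_q}=\max_{S}\Gamma_{\alpha,q}<q$, where the maximum is over the finitely many admissible $S$. I expect no serious obstacle: the only care needed is the invertibility of $2$ (which uses that $q$ is odd) and the exact degree matching above.
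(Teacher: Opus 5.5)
Your proposal is correct and follows the paper's proof essentially verbatim: you use $d=3$, the tuples $(a_j,-2a_j,a_j)$, and the product-over-complements polynomial $g$ of degree $\mu n=(3\alpha-1)(q-1)n$ fed into Theorem~\ref{main2'}. The only cosmetic difference is that you write the common difference as $-\tfrac12 x_{2\ell}-x_{1\ell}$ instead of the paper's $(x_{3\ell}-x_{1\ell})/2$; the two agree on every zero-sum triple. (One small point: oddness of $q$ is what makes this linear form well defined, whereas $r=0\Rightarrow j_1=j_2=j_3$ only uses that the $a_j$ are distinct.)
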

\begin{proof}
We will apply Theorem \ref{main2'}. Let $d=3$, and, for $j \in [N]$, let $a_{1j}=a_{3j}=a_j$ and $a_{2j}=-2a_j$. 
Let  $g({\bf x_1},{\bf x_2},{\bf x_3})=\prod_{\ell=1}^n \prod_{t \in \mathbb{F}_q \setminus S_\ell} ((x_{3\ell}-x_{1\ell})/2-t)$, where $x_{i\ell}$ is the $\ell^{\textrm{th}}$ coordinate of ${\bf x_i}$. Note that $g$ has degree $\mu n = (3\alpha-1)(q-1)n$. As there are no nontrivial three-term arithmetic progressions in $A$ with common difference in $S_1 \times \cdots \times S_n$, we have that $a_{1j_1}+a_{2j_2}+a_{3j_3}=0$ and $g(a_{1j_1},a_{2j_2},a_{3j_3}) \ne 0$ if and only if $j_1=j_2=j_3$. Indeed, $a_{1j_1}+a_{2j_2}+a_{3j_3}=0$ is equivalent to $a_{j_1},a_{j_2},a_{j_3}$ forming a three-term arithmetic progression. If $a_{j_1},a_{j_2},a_{j_3}$ form a three-term arithmetic progression, then the common difference is $(a_{j_3}-a_{j_1})/2=(a_{3j_3}-a_{1j_1})/2$ and the common difference is in $S_1 \times \cdots S_n$ if and only if $g(a_{1j_1},a_{2j_2},a_{3j_3}) \neq 0$. By Theorem \ref{main2'}, if $\alpha < 1/2$, then $|A|\leq (\Gamma_{\alpha,q})^n$. 
\end{proof}

We conclude with two open problems. The first asks if we can improve the range of $|S|$ in Theorem~\ref{threeapwithrestricteddifferences}, a problem which seems to already be interesting for $|S| = \lfloor (q+1)/2 \rfloor$, but, as suggested by Green~\cite{Green}, may be true all the way down to $|S| = 2$.

\begin{problem}
Improve the range of $|S|$ in Theorem \ref{threeapwithrestricteddifferences}. 
\end{problem}

Our second problem stems from the fact that the multicolor sum-free result was used~\cite{FL, FLS} to prove an arithmetic triangle removal lemma in $\mathbb{F}_q^n$. In line with this, it is natural to wonder if there is a restricted differences version extending Theorem \ref{main2}. 

\begin{problem}
Is there an arithmetic triangle removal lemma with restricted differences? 
\end{problem}

\end{document}